\documentclass[reqno, 12pt]{amsart}

\pdfoutput=1


\usepackage{latexsym}
\usepackage[centertags]{amsmath}
\usepackage{amsfonts}
\usepackage{amssymb}
\usepackage{amsthm}
\usepackage{newlfont}
\usepackage{graphics}
\usepackage{color}
\usepackage{float}
\usepackage{diagbox}
\textwidth 480pt \hoffset -60pt \textheight 9in \voffset -30pt
\parindent 8mm
\parskip 2mm
\usepackage{longtable}
\usepackage{rotating}
\usepackage{multirow}

\usepackage{extarrows}

\usepackage[sort,compress,numbers]{natbib}

\usepackage[utf8]{inputenc}

\newtheorem{thm}{Theorem}
\newtheorem{cor}[thm]{Corollary}
\newtheorem{lem}[thm]{Lemma}
\newtheorem{prop}[thm]{Proposition}

\theoremstyle{mydefinition}

\newtheorem{exa}[thm]{Example}

\theoremstyle{myremark}

\allowdisplaybreaks[1]



\def\pa[1]{\frac{\partial}{\partial x}}

\newcommand{\qfac}[1]{(q)_n}

\title{Hankel Determinants for a Class of Weighted Lattice Paths}

\author{Ying Wang$^1$ and Zihao Zhang$^{2,*}$}

\address{ $^{1}$School of Mathematics and Statistics, North China University of Water Resources and Electric Power,
 Zhengzhou 450045, PR China
 }

 \address{$^{2}$School of Mathematical Sciences, Capital Normal University,
 Beijing 100048, PR China
 }

\email{$^1$\texttt{wangying2019@ncwu.edu.cn}\ \ \ \& $^2$\texttt{zihao-zhang@foxmail.com}}
\date{\today}

\begin{document}

\maketitle

\begin{abstract}
 In this paper, our primary goal is to calculate the Hankel determinants for a class of lattice paths, which are distinguished by the step set consisting of \(\{(1,0), (2,0), (k-1,1), (-1,1)\}\), where the parameter \(k\geq 4\). These paths are constrained to return to the $x$-axis and remain above the \(x\)-axis.
 When calculating for \(k = 4\), the problem essentially reduces to determining the Hankel determinant of \(E(x)\), where \(E(x)\) is defined as
\[ E(x) = \frac{a}{E(x)x^2(dx^2 - bx - 1) + cx^2 + bx + 1}. \]
Our approach involves employing the Sulanke-Xin continued fraction transform to derive a set of recurrence relations, which in turn yield the desired results. For \(k \geq 5\), we utilize a class of shifted periodic continued fractions as defined by Wang-Xin-Zhai, thereby obtaining the results presented in this paper.

\end{abstract}

\noindent
\begin{small}
 \emph{Mathematic subject classification}: Primary 15A15; Secondary 05A15, 11B83.
\end{small}

\noindent
\begin{small}
\emph{Keywords}: Hankel determinants; Lattice path; Continued fractions.
\end{small}

\newcommand\con[1]{\equiv_{#1}}

\section{Introduction}

This paper delves into the application of generating functions to the computation of Hankel determinants. Specifically, we define the Hankel determinant of a generating function \( A(x) = \sum_{n \geq 0} a_n x^n \) as
\[ H_n(A(x)) = \det (a_{i+j})_{0 \leq i, j \leq n-1}, \]
with the convention that \( H_0(A(x)) = 1 \).

There has been a proliferation of research dedicated to the evaluation of Hankel determinants corresponding to a variety of lattice path counting numbers. Classical lattice path Hankel determinants include binomial coefficients, Catalan numbers \cite{J.M.E.Mays J.Wojciechowski} and their shifts, Motzkin numbers \cite{M. Aigner,J.Cigler,J.Cigler-conjecture}, large and little Schröder numbers \cite{R. A Brualdi and S. Kirkland.}, among others. Special cases such as peak-height-restricted lattice paths \cite{Chien-Eu-Fu,Ciglerconvolution}, weighted Dyck paths \cite{M. Aigner Catalan-like}, and those without closed forms \cite{X. K. Chang-X. B. Hu and G. Xin,P. Barry, Xin Somos4, Wang-Zhang} have also been studied. For further references, see \cite{M. Aigner Catalan-like, M. Aigner Catalan, P. Barry,P. Barry Catalan, D.M. Bressoud, R. A Brualdi and S. Kirkland., M. Elouafi, I. Gessel and G. Viennot., Q.-H. Hou-A. Lascoux-Y.-P. Mu,C. Krattenthaler.1999, C. Krattenthaler.,C. Krattenthaler.2010,Mu.Lili-Wang.Yi-Yeh.Yeong-Nan,Mu.Lili-Wang.Yi, R.Sulanke and Xin, U. Tamm}.

The classical method of continued fractions, either by \(J\)-fractions (Krattenthaler \cite{C. Krattenthaler.}) or by \(S\)-fractions (Jones and Thron \cite[Theorem 7.2]{W. B. Jones and W. J. Thron}), requires \(H_n(A(x)) \neq 0\) for all \(n\). Gessel-Xin's \cite{Gessel and Xin} continued fraction method, however, allows \(H_n(A(x)) = 0\) for some values of \(n\). Their method is based on three rules about two-variable generating functions that can transform one set of determinants to another set of determinants of the same values. These rules correspond to a sequence of elementary row or column operations. This method was systematically used by Sulanke-Xin \cite{R.Sulanke and Xin} for evaluating Hankel determinants of quadratic generating functions, such as known results for Catalan numbers, Motzkin numbers, Schröder numbers, etc.

Sulanke-Xin defined a quadratic transformation \(\tau\) such that \(H(F(x))\) and \(H(\tau(F(x)))\) have simple connections. Recently, shifted periodic continued fractions (of order \(q\)) of the form
$$
F_0^{(p)}(x) \mathop{\longrightarrow}\limits^\tau F_1^{(p)}(x)\mathop{\longrightarrow}\limits^\tau \cdots \mathop{\longrightarrow}\limits^\tau F_q^{(p)}(x)=F_0^{(p+1)}(x)
$$
were found in \cite{Y. Wang and G. Xin}, which appear in Hankel determinants of many path counting numbers. Here \(p\) is an additional parameter. If one can guess an explicit formula for \(F_0^{(p)}(x)\), then their Hankel determinants can be easily computed.

For the lattice paths defined by the path set \(\{(1,0),(k-1,1),(-1,1)\}\),  where \(k \geq 2\).
The generating function for these cases is expressed as follows:
$$
G^{1,k}(x,a,b_1,c) = \frac{a}{1 + b_1 x + cx^k G^{1,k}(x,a,b_1,c)}.
$$
The case where \( k = 2 \) corresponds to the weighted Motzkin numbers and its Hankel determinants has been investigated. For \( k > 2 \), the Hankel determinants are provided in \cite{J.Cigler, Y. Wang and G. Xin},

In this paper, our primary aim is to compute the Hankel determinants of lattice paths defined by the path set \(\{(1,0),(2,0),(k-1,1),(-1,1)\}\), where \(k \geq 4\). The weights associated with these paths are \(b_1\) for \((1,0)\),    \(b_2 \neq 0\) for \((2,0)\), \(c\) for \((k-1,1)\), and \(1\) for \((-1,1)\). The generating function is given by
$$
G^{2,k}(x,a,b_1,b_2,c) = \frac{a}{1 + b_1x + b_2x^2 + cx^k G^{2,k}(x,a,b_1,b_2,c)}.
$$

For \(k=1,2,3\), the formulas of  $H_n(G^{2,k}(x,a,b_1,b_2,c))$  are typically available only for a few certain special weights. For instance, when \(k=3\), the formula of $H_n(G^{2,k}(x,a,t,t+1,1))$ can be found in \cite{Y. Wang and G. Xin}.


In general, for \(r>2\) and \(k > 2r\), the path set \(\{(1,0),(2,0),\dots,(r,0), (k-1,1),(-1,1)\}\), the calculation process is analogous to that presented in this paper.
A specific instance is resolvable for particular values of $k = 2r + 1$ or $k = 2r$, exemplified by the convolution of Catalan numbers \cite{J.Cigler-Catalan,Ciglerconvolution,Fulmek}. However, it is challenging to compute when $k \leq 2r$ even for a fixed $r$.

In this paper, we also focus on the continued fraction $E(x)$ which satisfied
\begin{equation}
E(x) = \frac{a}{E(x)x^2(dx^2 - bx - 1) + cx^2 + bx + 1}, \label{eq:E_x}
\end{equation}
which arise from the process of calculating $G^{2,4}(x,a,b_1,b_2,c)$.  Furthermore, when $a=b=d=1$, $c=-1$, the Hankel determinants of  $E(x)$ is the odd Fibonacci repeated (OEIS.A094967). See the proof
in the Section \ref{sec:recurrences}.

For convenient, we denote $B_n(a,c,d)$ as a second-order  linear homogeneous recurrence relation defined by
$$ B_{n+1} (a,c,d) = \beta B_n (a,c,d) + \alpha B_{n-1} (a,c,d) $$
with initial conditions $B_0 =1 $ and $B_1 = c - a$, and constants $\alpha = a(c + d - a)$, $\beta = c - 2a$.
It is well-established that the second-order  linear homogeneous recurrence have been extensively investigated. For a relevant reference, see \cite{MTM2016}.

The main results of this paper are as follows:

\begin{thm}\label{thm-e}
If $H_{n}(E)\neq0$ for all $n$, then we have
\begin{align*}
  H_{2n+1}(E) &=a^{(n+1)^2} (c + d - a)^{n^2}B_n(a,c,d),\\
  H_{2n}(E)  &=(-1)^na^{n(n+1)}  (c + d - a) ^{n(n-1)}B_n(a,c,d),
\end{align*}
\end{thm}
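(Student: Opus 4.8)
The plan is to treat \eqref{eq:E_x} as a quadratic generating function and to run the Sulanke--Xin continued fraction machinery on it. First I would clear denominators in \eqref{eq:E_x} to obtain the defining quadratic
\[
x^2(dx^2 - bx - 1)\,E(x)^2 + (cx^2 + bx + 1)\,E(x) - a = 0,
\]
which exhibits $E(x)$ as a power series with $E(0)=a$ to which the transform $\tau$ of \cite{R.Sulanke and Xin} applies. Before iterating, I would check that the parameter $b$ enters only through the linear (``$b_n$'') data of the associated $J$-fraction and therefore cannot affect any $H_n(E)$; this is exactly the reason $b$ is absent from the claimed formulas, and it lets me set $b=0$ throughout, reducing the bookkeeping to the three parameters $a,c,d$ that actually appear.

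Next I would compute $\tau(E)$ and then $\tau^2(E)$ explicitly. I expect $\tau$ not to return a function of the same shape, but $\tau^2$ to do so: that is, $E=E^{(0)}_0$ should fit into a shifted periodic continued fraction of order $2$ in the sense of \cite{Y. Wang and G. Xin},
\[
E^{(p)}_0(x) \xrightarrow{\ \tau\ } E^{(p)}_1(x) \xrightarrow{\ \tau\ } E^{(p+1)}_0(x),
\]
with the step $E^{(p)}_0 \mapsto E^{(p+1)}_0$ acting on $(a,c,d)$ by an explicit rule. The decisive step is to guess a closed form for the generic member $E^{(p)}_0(x)$ (equivalently $E^{(p)}_1(x)$) and to verify it against the functional equation produced by $\tau^2$. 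Because the convergents of a continued fraction are ratios of solutions of a linear recurrence, the coefficient data of this closed form should be governed by precisely the second-order recurrence defining $B_p(a,c,d)$: matching the initial functions pins down $B_0=1$ and $B_1=c-a$, while the transform rule forces $\beta=c-2a$ and $\alpha=a(c+d-a)$.

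Finally I would assemble the answer from the Hankel-determinant transfer rule attached to $\tau$, which relates $H_n$ of a series to $H_{n-1}$ of its $\tau$-image up to an explicit monomial factor. Iterating over the period-$2$ structure, each full period contributes fixed powers of $a$ and of $c+d-a$, so that after collecting these over $n$ steps one obtains the overall exponents $(n+1)^2$, $n^2$ in the odd case and $n(n+1)$, $n(n-1)$ in the even case, while the surviving tail contributes exactly $B_n(a,c,d)$; the parity split and the sign $(-1)^n$ in $H_{2n}(E)$ arise according to whether the iteration halts on a full or a half period. I would close by checking $H_1,H_2,H_3$ by hand to fix all constants. The main obstacle is the middle step: correctly performing the two-fold $\tau$-computation, guessing and verifying the explicit periodic family $E^{(p)}_0(x)$, and then carrying out the exponent bookkeeping so that the accumulated powers of $a$ and $c+d-a$ land on the stated quadratic exponents.
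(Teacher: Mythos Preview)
Your plan is essentially the paper's: apply $\tau$ twice to see that $E$ returns to its own form (Lemma~\ref{lem:e-f-g} and the parameter recursions \eqref{e-aa}--\eqref{e-dd}), express the iterated parameters $a_p$ and $\hat a_p=a_p-c_p$ as ratios of consecutive terms of the second-order linear sequence $B_p$, and telescope the products \eqref{he2n+1}--\eqref{he2n}. The one execution difference is that the paper does not guess the closed form of $E^{(p)}_0$: it first derives the invariant $2a_p-c_p=2a_0-c_0$ (Theorem~\ref{thm-rec-a}), which collapses the three-variable recursion on $(a,c,d)$ to the single M\"obius map $a_{p+1}(\beta+a_p)=\alpha$, after which writing $a_p=A_p/B_p$ with $B_p$ satisfying a linear recurrence is automatic (Corollary~\ref{cor-an}); spotting this invariant will likely save you work over pure guessing.
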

For \(k \geq 4\), we have the following two theorem about the Hankel determinants of \(G^{2,k}(x,a,b_1,b_2,c)\)
\begin{thm}\label{th-g24}
  If $H_n(G^{2,4}(x,a,b_1,b_2,c)) \neq 0$ for all $n\geq 0$. Then
  \begin{align*}
    H_{2n+1}(G^{2,4}(x,a,b_1,b_2,c)) =& b_2 a^{2n+1} (ac)^{n^2} B_{n-1}\left(\frac{ac}{b_2}, \frac{2ac+b_2^2}{b_2}, -\frac{ac}{b_2}\right),\\
     H_{2n}(G^{2,4}(x,a,b_1,b_2,c))=&(-1)^nb_2a^{2n}(ac)^{n(n-1)} B_{n-1}\left(\frac{ac}{b_2}, \frac{2ac+b_2^2}{b_2}, -\frac{ac}{b_2}\right).
  \end{align*}
\end{thm}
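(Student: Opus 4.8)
The plan is to reduce the computation of $H_n(G^{2,4})$ to that of $H_n(E)$, which Theorem~\ref{thm-e} already settles, the bridge between the two being the Sulanke--Xin transform $\tau$. Write $G = G^{2,4}(x,a,b_1,b_2,c)$, so that $G$ obeys the quadratic functional equation $cx^4G^2 + (1+b_1x+b_2x^2)G - a = 0$. The first task is to show that applying $\tau$ (together with the elementary row/column operations encoded in Gessel--Xin's three rules on the bivariate series $\sum_{i,j}a_{i+j}x^iy^j$) carries $G$ into the continued fraction $E$ of \eqref{eq:E_x}, specialized to
\[
\tilde a = \frac{ac}{b_2},\qquad \tilde c = \frac{2ac+b_2^2}{b_2},\qquad \tilde d = -\frac{ac}{b_2},
\]
with the linear parameter $\tilde b$ inherited from $b_1$; its value is immaterial, since the formulas of Theorem~\ref{thm-e} do not involve $b$, which is also why $b_1$ is absent from the final answer. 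Denote by $E^{\ast}(x)$ the function of \eqref{eq:E_x} with $(a,b,c,d)$ replaced by $(\tilde a,\tilde b,\tilde c,\tilde d)$.

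Next I would track the monomial prefactor and index shift that $\tau$ induces on Hankel determinants. The target relation, which I would first confirm on the base cases $H_0(G)=1$, $H_1(G)=a$, $H_2(G)=-a^2b_2$ read off from $G = a - ab_1x + a(b_1^2-b_2)x^2 + \cdots$ (already exhibiting the advertised independence from $b_1$), is
\[
H_N(G^{2,4}) = (-1)^{N-1}a^N b_2^{N-1}\, H_{N-2}(E^{\ast}),\qquad N\geq 2,
\]
the shift by two in the index reflecting the degree-four term $cx^4G$.

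Finally I would substitute the specialized parameters into Theorem~\ref{thm-e}. The decisive simplification is $\tilde c + \tilde d - \tilde a = b_2$, so that the recurrence defining $B_n(\tilde a,\tilde c,\tilde d)$ has the clean constants $\alpha = \tilde a(\tilde c+\tilde d-\tilde a)=ac$ and $\beta = \tilde c-2\tilde a=b_2$. Setting $N=2n+1$ and then $N=2n$, and writing $m=n-1$ for the index of $B$, one checks that the powers of $\tilde a=ac/b_2$ and of $b_2$ produced by Theorem~\ref{thm-e} recombine with the prefactor $a^Nb_2^{N-1}$ so that every net power of $b_2$ collapses to $b_2^1$, leaving exactly $b_2a^{2n+1}(ac)^{n^2}B_{n-1}$ in the odd case and $(-1)^nb_2a^{2n}(ac)^{n(n-1)}B_{n-1}$ in the even case, which is the content of Theorem~\ref{th-g24}. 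The case $N=1$ is recovered from the same closed form once $B_{-1}$ is read through the recurrence as $B_{-1}=\tilde a/\alpha=1/b_2$, giving $H_1(G)=b_2\,a\,B_{-1}=a$.

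The main obstacle is the first step: establishing the transform relation rigorously, that is, proving that $\tau$ sends $G^{2,4}$ to precisely this specialization of $E$ and pinning down the exact prefactor $(-1)^{N-1}a^Nb_2^{N-1}$ and the index shift by two. Because the functional equation of $G$ has a gap (no $x^3$ term) and a degree-four leading term, this is not a substitution but a genuine continued-fraction manipulation; I expect to carry it out by expressing $G$ as the periodic continued fraction with partial numerators $acx^4$ and partial denominators $1+b_1x+b_2x^2$, applying $\tau$ and simplifying, and matching the result against the expansion of $E$. Care will be needed at the small-$N$ boundary, where $H_{N-2}(E^{\ast})$ must be read with the conventions $H_0=1$ and $B_{-1}=1/b_2$, and to verify that the nonvanishing hypothesis on $H_n(G^{2,4})$ transfers to the nonvanishing hypothesis of Theorem~\ref{thm-e}; the computations in Section~\ref{sec:recurrences} should supply exactly what is required.
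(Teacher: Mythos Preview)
Your plan is exactly the paper's approach: apply $\tau$ twice to $G^{2,4}$ to land on the $E$-type continued fraction with parameters $(\tilde a,\tilde c,\tilde d)=(ac/b_2,(2ac+b_2^2)/b_2,-ac/b_2)$, obtaining the relation $H_N(G_0)=a^N(-b_2)^{N-1}H_{N-2}(G_2)$, and then invoke Theorem~\ref{thm-e}. The ``main obstacle'' you flag---pinning down the prefactor and shift---is precisely the two explicit $\tau$-steps the paper records (with the intermediate $G_1=\frac{-acx^2-b_2b_1x-b_2}{-G_1x^2-b_2x^2+b_1x+1}$), and your predicted prefactor, index shift, and final algebraic simplification all match.
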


\begin{thm}\label{thmG2k}
When $k \geq 5$, we have
\begin{align*}
 H_{ k  n }(G^{2,k}(x,a,b_1,b_2,c)) &=\mu_1{a}^{ k  \,n \left( n-1 \right)+(2k-1)\,n}{c}^{ k  \,n \left( n-1 \right) +(k-1)\,n},\\
  H_{ k  n+1}(G^{2,k}(x,a,b_1,b_2,c)) &=\lambda_1{a}^{ k  \,n \left( n+1 \right)+n+1 }{c}^{ k  \,n
 \left( n+1 \right) -(k-1)\,n} , \\
  H_{ k  n+2}(G^{2,k}(x,a,b_1,b_2,c)) &=\lambda_2 \left( n+1 \right) b_{{2}}{a}^{ k  \,n \left( n+1 \right)+3\,n+2 }{c}^{ k  \,n \left( n+1 \right) -(k-3)\,n}, \\
  H_{ k  n+3}(G^{2,k}(x,a,b_1,b_2,c)) &=\cdots= H_{ k  n+k-2}(G^{2,k}(x,a,b_1,b_2,c)) =0,\\
 H_{ k  n+k-1}(G^{2,k}(x,a,b_1,b_2,c)) &=\mu_2\left( n+1 \right) b_{{2}}{a}^{ k  \,n \left( n+1 \right)+(2k-3)\,n+(2k-4) }{c}^{ k  \,n \left( n+1 \right) +(k-3)\,n+(k-3)}.
\end{align*}
For $k $ mod 4=0 or 1: we have $\lambda_1=-\lambda_2=\mu_1=\mu_2=1.$\\
For $k $ mod 4=2 : we have $\lambda_1=-\lambda_2=(-1)^n, \mu_1=\mu_2=\left( -1 \right) ^{n+k+1}.$\\
For $k $ mod 4=3 : we have $\lambda_1=-\lambda_2=(-1)^n, \mu_1=\mu_2=\left( -1 \right) ^{n+k}.$
\end{thm}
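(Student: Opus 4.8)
The plan is to adapt the shifted periodic continued fraction machinery of Wang-Xin-Zhai, exactly as it was used for \(G^{1,k}\), to the present function \(G:=G^{2,k}(x,a,b_1,b_2,c)\). First I would rewrite the defining equation \(G=a/(1+b_1x+b_2x^2+cx^kG)\) as a continued fraction and feed it into the Gessel-Xin/Sulanke-Xin reduction rules, so that each Hankel-determinant-preserving step is realized by the quadratic transformation \(\tau\). The target is to exhibit a chain
\[
F_0^{(p)}\xrightarrow{\;\tau\;}F_1^{(p)}\xrightarrow{\;\tau\;}\cdots\xrightarrow{\;\tau\;}F_k^{(p)}=F_0^{(p+1)},
\]
with \(F_0^{(0)}=G\); the period \(q=k\) is forced by the period-\(k\) residue pattern visible in the statement (the nonzero classes \(kn,kn+1,kn+2\), the block of zeros \(kn+3,\dots,kn+k-2\), and the extra nonzero class \(kn+k-1\)).

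The heart of the argument is to guess and then verify explicit rational expressions for the intermediate functions \(F_j^{(p)}(x)\), and in particular a closed form for \(F_0^{(p)}(x)\) in which the parameter \(p\) appears polynomially. I would determine the ansatz by computing the first few \(\tau\)-orbits symbolically, tracking how the \(b_2x^2\) term propagates, since this is the only new feature relative to \(G^{1,k}\); then I would confirm it by induction on \(p\), checking that a single \(\tau\)-step sends the guessed \(F_j^{(p)}\) to \(F_{j+1}^{(p)}\) and that the \(k\)-th step closes the period with \(p\mapsto p+1\). At each \(\tau\)-step the transform multiplies the Hankel determinant by an explicit leading-coefficient factor (a monomial in \(a,c\), and occasionally \(b_2\)); multiplying these factors across one full period yields the quadratic-in-\(n\) exponents \(k\,n(n\pm1)\) of \(a\) and \(c\), while the degenerate (zero-width) steps inside the period are precisely what force \(H_{kn+3}=\cdots=H_{kn+k-2}=0\). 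Reading off the accumulated factors in each surviving residue class then gives the displayed monomials, with the linear factor \((n+1)b_2\) in the \(kn+2\) and \(kn+k-1\) classes coming from the single place where the \(b_2x^2\) term contributes a non-monomial factor.

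Finally I would settle the signs \(\lambda_1,\lambda_2,\mu_1,\mu_2\). These are governed by the sign picked up when reindexing the determinant across the block of zeros and by the parity of the number of sign-reversing elementary operations used per period, both of which depend on \(k\) only through \(k\bmod 4\); I expect a clean bookkeeping of \((-1)\)-factors to reproduce the three cases \(k\equiv 0,1\), \(k\equiv 2\), and \(k\equiv 3\) stated in the theorem.

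The main obstacle will be twofold: producing the correct explicit formula for \(F_0^{(p)}(x)\), where the periodicity of the \(\tau\)-orbit is easy to assert but the exact dependence on \(p\), \(b_2\), and the other weights must be guessed and then rigorously verified; and the sign analysis, where the \(k\bmod 4\) dichotomy makes it easy to lose a \((-1)^n\) or a constant sign when crossing the zero block. Keeping careful track of these signs, rather than of the magnitudes, is where the real care is needed.
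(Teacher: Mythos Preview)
Your overall strategy---apply the Sulanke--Xin transformation $\tau$ repeatedly to $G^{2,k}$ and look for a shifted periodic continued fraction whose accumulated factors give the Hankel determinants---is exactly the paper's approach. However, one concrete expectation in your plan is wrong and would send you hunting for the wrong object: the $\tau$-orbit does \emph{not} have period $k$. In the paper, after one initial step $G_0\xrightarrow{\tau}G_1^{(1)}$, only \emph{four} further applications of $\tau$ close the loop, $G_1^{(p)}\xrightarrow{\tau}G_2^{(p)}\xrightarrow{\tau}G_3^{(p)}\xrightarrow{\tau}G_4^{(p)}\xrightarrow{\tau}G_1^{(p+1)}$. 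The $k$-periodicity in the Hankel indices arises because the step $G_2^{(p)}\to G_3^{(p)}$ has $d=k-4$ in the Sulanke--Xin normal form and therefore drops the determinant index by $k-3$ instead of $1$; the total index drop across the four steps is $1+(k-3)+1+1=k$, yielding the recursion $H_n(G_1^{(p)})=(-1)^{\binom{k-2}{2}-k}(ca)^{2n-k+1}H_{n-k}(G_1^{(p+1)})$.

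Relatedly, the block of zeros $H_{kn+3}=\cdots=H_{kn+k-2}=0$ is not produced by ``degenerate steps inside the period'' as you suggest, but by the \emph{initial values}: one has $H_2(G_1^{(p)})=\cdots=H_{k-3}(G_1^{(p)})=0$, and these vanish precisely because $G_2^{(p)}$ carries an $x^{k-4}$ in its numerator. Once you adjust your ansatz to a period-$4$ orbit (with $p$ entering the coefficients of $G_1^{(p)}$ polynomially) and compute the four nonzero initial values $H_0,H_1,H_{k-2},H_{k-1}$ of $G_1^{(p)}$, the rest of your outline---tracking monomial factors, the single $(n+1)b_2$ contribution, and the $(-1)^{\binom{k-3}{2}}$-type signs that give the $k\bmod 4$ cases---matches the paper exactly.
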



The paper is organized as follows. Section \ref{maintool} present the primary method for solving
Hankel determinants, namely the continued fraction approach of Sulanke and Xin.
Section \ref{sec:recurrences} provide several recurrence relations that are utilized in the computation
of $G^{2,4}(x,a,b_1,b_2,c)$.
In Section \ref{systemg24} , we evaluate the Hankel determinant of $G^{2,4}(x,a,b_1,b_2,c)$.
Section \ref{systemg2r} proof the Hankel determinant for \(G^{2,k}(x,a,b_1,b_2,c)\) for $k>4$.

\section{Main tool}\label{maintool}

We will introduce the continued fraction method of Sulanke and Xin, especially their quadratic transformation $\tau$ in \cite{R.Sulanke and Xin}.
This is the main tool of this paper.

\subsection{Sulanke-Xin's quadratic transformation $\tau$\label{sec:sulanke-xin-trans}}
This subsection is copied from \cite{Y. Wang and G. Xin}. We include it here for the reader's convenience.

Suppose the generating function $F(x)$ is the unique solution of a quadratic functional equation which can be written as
\begin{gather}
  F(x)=\frac{x^d}{u(x)+x^kv(x)F(x)},\label{xinF(x)}
\end{gather}
where $u(x)$ and $v(x)$ are rational power series with nonzero constants, $d$ is a nonnegative integer, and $k$ is a positive integer.
We need the unique decomposition of $u(x)$ with respect to $d$: $u(x)=u_L(x)+x^{d+2}u_H(x)$ where $u_L(x)$ is a polynomial of degree at most $d+1$ and $u_H(x)$ is a power series.
Then Propositions 4.1 and 4.2 of \cite{R.Sulanke and Xin} can be summarized as follows.
\begin{prop}\label{xinu(0,i,ii)}
Let $F(x)$ be determined by  \eqref{xinF(x)}. Then the quadratic transformation $\tau(F)$ of $F$ defined as follows gives close connections
between   $H(F)$ and $H(\tau(F))$.
\begin{enumerate}
\item[i)] If $u(0)\neq1$, then $\tau(F)=G=u(0)F$ is determined by $G(x)=\frac{x^d}{u(0)^{-1}u(x)+x^ku(0)^{-2}v(x)G(x)}$, and $H_n(\tau(F))=u(0)^{n}H_n(F(x))$;

\item[ii)] If $u(0)=1$ and $k=1$, then $\tau(F)=x^{-1}(G(x)-G(0))$, where $G(x)$ is determined by
$$G(x)=\frac{-v(x)-xu_L(x)u_H(x)}{u_L(x)-x^{d+2}u_H(x)-x^{d+1}G(x)},$$
and we have
$$H_{n-d-1}(\tau(F))=(-1)^{\binom{d+1}{2}}H_n(F(x));$$

\item[iii)] If $u(0)=1$ and $k\geq2$, then $\tau(F)=G$, where $G(x)$ is determined by
$$G(x)=\frac{-x^{k-2}v(x)-u_L(x)u_H(x)}{u_L(x)-x^{d+2}u_H(x)-x^{d+2}G(x)},$$
and we have
$$H_{n-d-1}(\tau(F))=(-1)^{\binom{d+1}{2}}H_n(F(x)).$$\label{xinu(0)(ii)}

\end{enumerate}

\end{prop}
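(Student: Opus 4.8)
The plan is to follow the Gessel--Xin philosophy underlying Sulanke--Xin's Propositions 4.1 and 4.2: encode the Hankel matrix of a series in a two-variable generating function, and realize each step of the transformation $\tau$ as a sequence of elementary row and column operations whose effect on the determinant is transparent. I would organize the argument according to the three cases, treating i) as a warm-up and concentrating the real work on ii) and iii).

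\emph{Case i).} When $u(0)\neq1$, I set $G=u(0)F$ and substitute $F=u(0)^{-1}G$ into \eqref{xinF(x)}; a one-line rearrangement produces the stated functional equation for $G$. Since $g_n=u(0)\,a_n$ for every $n$, the matrix $(g_{i+j})_{0\le i,j\le n-1}$ is exactly $u(0)$ times $(a_{i+j})_{0\le i,j\le n-1}$, and pulling the common factor out of each of the $n$ rows gives $H_n(\tau(F))=u(0)^{n}H_n(F)$.

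\emph{Cases ii) and iii).} Here $u(0)=1$ forces $F(x)=x^{d}+O(x^{d+1})$, so that $a_0=\cdots=a_{d-1}=0$ and $a_d=1$. I would proceed in two stages. The \emph{algebraic stage} derives the functional equation for $G$: starting from \eqref{xinF(x)} together with the decomposition $u(x)=u_L(x)+x^{d+2}u_H(x)$ with $\deg u_L\le d+1$, I clear denominators, use $u(0)=1$ to cancel the leading contribution, and solve for the appropriately shifted series. The two variants of the resulting formula --- namely $x^{d+1}G$ versus $x^{d+2}G$ in the denominator, and $-v-xu_Lu_H$ versus $-x^{k-2}v-u_Lu_H$ in the numerator --- are dictated by whether $k=1$ or $k\ge2$, and the additional $x^{-1}(G(x)-G(0))$ in case ii) records the one further index shift that only the case $k=1$ permits.

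The \emph{determinantal stage} is the crux. The triangular block of zeros in the top-left corner of the Hankel matrix of $F$ (coming from $a_{i+j}=0$ whenever $i+j<d$) lets me apply the admissible Gessel--Xin operations: I subtract the multiples of rows and columns prescribed by the polynomial $u_L(x)$ and peel off the factor $x^d$, reducing $H_n(F)$ to a Hankel determinant of $G$ of size $n-d-1$. The sign $(-1)^{\binom{d+1}{2}}$ then appears as the signature of the permutation that reverses the $d+1$ rows (equivalently columns) exposed by this reduction, since reversing $d+1$ entries accounts for exactly $\binom{d+1}{2}$ transpositions. I expect the main obstacle to be the bookkeeping of this last stage: checking that every operation stays within the three permitted rules, that the index drops by precisely $d+1$, and that the sign is uniform across the $k=1$ and $k\ge2$ cases rather than picking up spurious factors from the differing functional equations.
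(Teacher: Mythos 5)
The paper contains no proof of this proposition: it is stated verbatim (copied from Wang--Xin--Zhai) as a summary of Propositions 4.1 and 4.2 of Sulanke--Xin, so the only proof to compare against is the one in that cited source. Your sketch --- case i) by scaling all entries and pulling $u(0)$ out of each of the $n$ rows, cases ii)--iii) by first deriving the functional equation for $G$ from \eqref{xinF(x)} and the decomposition $u=u_L+x^{d+2}u_H$, then reducing $H_n(F)$ through Gessel--Xin admissible row/column operations using the anti-triangular block created by $F=x^d+O(x^{d+1})$, with the sign $(-1)^{\binom{d+1}{2}}$ arising as the signature of the reversal of the $d+1$ exposed rows --- is precisely the Gessel--Xin/Sulanke--Xin argument, so it is correct in outline and essentially the same approach as the cited proof.
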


\section{The Solution of a System of Recurrences} \label{sec:recurrences}
In this chapter, we compute the Hankel determinants of the continued fractions $E(x)$:
\begin{equation}
E(x) = \frac{a}{E(x)x^2(dx^2 - bx - 1) + cx^2 + bx + 1}, \label{eq:E_x}
\end{equation}
which will  be shown to emerge naturally in the next section when calculating the Hankel determinants related to $G^{2,4}(x,a,b_1,b_2,c)$.

Indeed, when applying the Proposition \ref{xinu(0,i,ii)} to $E(x)$, it is imperative to consider whether $a$ is zero. For expediency, we introduce the notation $\hat{a}$ to denote $a-c$.
We first analyze the case for $\hat{a} = a - c \neq 0$ and present the following lemma.
\begin{lem}\label{lem:e-f-g}
  Let $E(x)$ be as specified in  \eqref{eq:E_x} , and assume that $\hat{a} = a - c \neq 0$. Consider the continued functions $F(x)=\tau (E(x))$ and $G(x)=\tau^2 (E(x))$ :
  \begin{align}
    F(x) &= \frac{ad x^2 - b(a - c)x + a - c}{1 + bx - cx^2 - F(x)x^2}, \label{eq:F_x}\\
    G(x) &= \frac{\frac{a(a - c - d)}{a - c}}{G(x)x^2\left(\frac{ad}{a - c}x^2 - bx - 1\right) + (c - \frac{2ad}{a - c})x^2 + bx + 1}. \label{eq:G_x}
  \end{align}
Then  for all integers $n \geq 1$, the following relations hold:
    \[ H_n(E) = a^n H_{n-1}(F), \]
    and for $n \geq 2$,
    \[ H_n(E)= a^n (a - c)^{n-1} H_{n-2}(G) = a^n \hat{a}^{n-1} H_{n-2}(G). \]
\end{lem}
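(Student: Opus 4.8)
The plan is to obtain $F$ and $G$ by applying the Sulanke--Xin transformation $\tau$ of Proposition \ref{xinu(0,i,ii)} twice, reading off at each stage both the new continued fraction and the induced relation between Hankel determinants. First I would put \eqref{eq:E_x} in the normal form \eqref{xinF(x)}: writing the denominator as $u(x)+x^{k}v(x)E(x)$ gives $u(x)=1+bx+cx^{2}$, $k=2$, and $v(x)=dx^{2}-bx-1$, while the numerator is the constant $a$ (so the exponent $d$ in \eqref{xinF(x)} is $0$). Here $u(0)=1$, $v(0)=-1\neq0$, and $a\neq0$, so we are in case (iii) with $k=2\ge2$ and $d=0$.

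Because the numerator is the constant $a$ rather than $1$, I would first peel it off: setting $E=aW$, the series $W=\dfrac{1}{u(x)+x^{2}(a v(x))W}$ has numerator $1$, and scaling every entry of the Hankel matrix gives $H_n(E)=a^{n}H_n(W)$. Now case (iii) applies to $W$ with the decomposition $u(x)=u_L(x)+x^{2}u_H(x)$, where $u_L(x)=1+bx$ and $u_H(x)=c$. Substituting $u_L,u_H$ and $a v(x)$ into the case (iii) formula produces $\tau(W)$ in the stated shape \eqref{eq:F_x}, and the determinant rule gives $H_{n-1}(\tau W)=(-1)^{\binom{1}{2}}H_n(W)=H_n(W)$ (the sign is trivial since $d=0$). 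Setting $F=\tau(E):=\tau(W)$ then yields $H_n(E)=a^{n}H_{n-1}(F)$ for $n\ge1$; the base case $n=1$ reads $H_1(E)=a=E(0)$, a useful consistency check.

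Next I would iterate on $F$. The new feature is that the numerator of \eqref{eq:F_x} is now a genuine quadratic with constant term $a-c=\hat a\neq0$ (this is exactly where the hypothesis $\hat a\neq0$ enters). Factoring it as $\hat a\,\widehat P(x)$ with $\widehat P(0)=1$ and writing $F=\hat a\,W'$, the reduced series $W'$ again has numerator $1$ but now with \emph{rational} data $u'(x)=(1+bx-cx^{2})/\widehat P(x)$ and $v'(x)=-1/\widehat P(x)$, both power series with $u'(0)=1$ and $v'(0)=-1$. Applying case (iii) once more gives $G=\tau^{2}(E)$ in the form \eqref{eq:G_x} together with $H_n(F)=\hat a^{\,n}H_{n-1}(G)$; replacing $n$ by $n-1$ and combining with the previous relation gives $H_n(E)=a^{n}\hat a^{\,n-1}H_{n-2}(G)$ for $n\ge2$. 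I would again verify the base case $n=2$, namely $H_2(E)=a^{2}(a-c)=a^{2}H_1(F)$.

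The main obstacle is the second application, not the first: once one divides by $\widehat P(x)$ the objects $u',v'$ are honest rational (nonpolynomial) power series, so extracting the degree-$\le1$ part $u'_L$ and the tail $u'_H$ and then simplifying the case (iii) formula back into the compact closed form \eqref{eq:G_x} is where the real bookkeeping lives. The payoff of that simplification is structural: $G$ has exactly the shape of $E$ with parameters $(a,c,d)\mapsto\bigl(\tfrac{a(a-c-d)}{a-c},\,c-\tfrac{2ad}{a-c},\,\tfrac{ad}{a-c}\bigr)$ and $b$ unchanged, which is what makes $\tau^{2}$ stabilize this family and sets up the recurrence later used for Theorem \ref{thm-e}. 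Throughout I would keep track of the nonvanishing conditions $a\neq0$ and $\hat a\neq0$ that legitimize each rescaling and each invocation of case (iii).
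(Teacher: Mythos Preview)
Your proposal is correct and follows exactly the route the paper intends: the paper does not spell out a proof of this lemma, since it is a direct computation obtained by applying Proposition~\ref{xinu(0,i,ii)} (case~(i) to normalize the numerator constant, then case~(iii)) twice, and you have supplied precisely that computation, including the correct identifications $d=0$, $k=2$, $u_L=1+bx$, $u_H=c$, and the crucial observation that $\hat a=a-c\neq0$ is needed so the second normalization is legitimate. Your remark that $G$ lands back in the $E$-family with the updated parameters $(a,b,c,d)\mapsto\bigl(\tfrac{a(a-c-d)}{a-c},\,b,\,c-\tfrac{2ad}{a-c},\,\tfrac{ad}{a-c}\bigr)$ is exactly the content of \eqref{e-aa}--\eqref{e-dd} and is what drives the rest of Section~\ref{sec:recurrences}.
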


Our proof is by iterative application the above Lemma \ref{lem:e-f-g}. To be
precise, define $E_0(x)=E(x)$, and recursively define $E_{n}(x)$
to be the unique power series solution of
\begin{align}
E_{n+1}(x)={\frac {a_{n+1} }{E_{n+1}(x)~{x}^{2} \left( d_{n+1} {x}^{2
}-b_{n+1} x-1 \right) +c _{n+1} {x}^{2}+b_{n+1} x+1}}
\end{align}
where

\begin{align}
a_{n+1}&=\frac{a_{n}^2 -a_n c_n - a_n d_n}{a_n-c_n}\label{e-aa}\\
b_{n+1}&=b_{n}\label{e-bb}\\
c_{n+1}&={\frac {a_nc_n-2\,a_nd_n-{c_n}^{2}}{a_n-c_n}}\label{e-cc}\\
d_{n+1}&={\frac {a_nd_n}{a_n-c_n}}\label{e-dd}.
\end{align}

To rigorously  apply Lemma \ref{lem:e-f-g} for the evaluation of $H_n(E(x))$, it is imperative that $\hat{a}_k := a_k - c_k \neq 0$ and $a_k \neq 0$ for all $k \leq 2n+1$. Under this assumption, it can be readily verified that:
\begin{align}
     H_{2n+1}(E)=& a_0^{2n+1}\hat{a}_0^{2n} a_1^{2n-1} \hat{a}_1^{2n-2} \cdots a_{n-1}^3\hat{a}_{n-1}^2 a_{n}, \label{he2n+1} \\
      H_{2n}(E) =& a_0^{2n}\hat{a}_0^{2n-1} a_1^{2n-2} \hat{a}_1^{2n-3} \cdots a_{n-1}^2 \hat{a}_{n-1}. \label{he2n}
\end{align}
The fact that the recursion system can be resolved for any arbitrary initial condition comes as a surprise. Following this, we introduce the recurrence relations for \( a_n \) and \( \hat{a}_n \).

\begin{thm} \label{thm-rec-a}
   Suppose  $\hat{a}_k = a_k - c_k \neq0$  and $a_k \neq 0$  for $k\le n$, and the sequences $a_n$, $c_n$, and $d_n$ satisfy the recursions \eqref{e-aa}, \eqref{e-bb}, \eqref{e-cc}, and \eqref{e-dd}. Then the following relations hold for $n> 0$:
   \begin{align}
     a_{n+1}(c_0 - 2a_0 + a_n) &= a_0(c_0+d_0 -a_0), \label{eq:first_relation}\\
     \hat{a}_{n+1} & =- a_{n+1}+ 2a_0 - c_0 . \label{eahat}
   \end{align}
\end{thm}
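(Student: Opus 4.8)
The plan is to avoid wrestling with the three coupled rational recursions \eqref{e-aa}--\eqref{e-dd} directly and instead recast everything in terms of the two quantities that actually appear in the statement, namely $a_n$ and $\hat a_n = a_n - c_n$. The first move I would make is to introduce the increment $d_{n+1} = a_n d_n/\hat a_n$ coming from \eqref{e-dd} and rewrite \eqref{e-aa} as $a_{n+1} = a_n(\hat a_n - d_n)/\hat a_n = a_n - d_{n+1}$. A short computation of $a_{n+1} - c_{n+1}$ from \eqref{e-aa} and \eqref{e-cc}, using $(a_n - c_n)^2 + a_n d_n = \hat a_n^2 + a_n d_n$ for the numerator over the common denominator $\hat a_n$, should likewise collapse to $\hat a_{n+1} = \hat a_n + d_{n+1}$. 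These two identities are the crux of the whole argument; getting the recursions into this clean ``$a$ decreases by $d_{n+1}$, $\hat a$ increases by $d_{n+1}$'' form is the step I expect to be the main (though not deep) obstacle, since once it is in hand everything downstream is forced.

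Granting those two identities, relation \eqref{eahat} is immediate. Adding them gives $a_{n+1} + \hat a_{n+1} = a_n + \hat a_n$, so $a_n + \hat a_n$ is independent of $n$ and equals its value $2a_0 - c_0$ at $n = 0$. Rearranging $a_{n+1} + \hat a_{n+1} = 2a_0 - c_0$ yields $\hat a_{n+1} = -a_{n+1} + 2a_0 - c_0$, which is exactly \eqref{eahat}. I would record here the equivalent form $a_n + (c_0 - 2a_0) = -\hat a_n$, which is what feeds into the other relation.

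For \eqref{eq:first_relation} the plan is to show that the product $a_{n+1}\hat a_n$ is also independent of $n$. From the rewritten form of \eqref{e-aa} one has $a_{n+1}\hat a_n = a_n(\hat a_n - d_n)$, and the identity $\hat a_n = \hat a_{n-1} + d_n$ (that is, $\hat a_n - d_n = \hat a_{n-1}$) turns this into $a_{n+1}\hat a_n = a_n \hat a_{n-1}$ for $n \geq 1$. Hence $a_{n+1}\hat a_n$ telescopes back to $a_1\hat a_0 = a_0(a_0 - c_0 - d_0)$. Combining this constant value with $a_n + (c_0 - 2a_0) = -\hat a_n$ from the previous step gives
\[
a_{n+1}\bigl(c_0 - 2a_0 + a_n\bigr) = -a_{n+1}\hat a_n = -a_0(a_0 - c_0 - d_0) = a_0(c_0 + d_0 - a_0),
\]
which is \eqref{eq:first_relation}.

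Throughout, the hypothesis $\hat a_k \neq 0$ and $a_k \neq 0$ for $k \leq n$ is precisely what justifies the divisions by $\hat a_k$ in forming $d_{k+1}$ and in the step $a_{k+1}\hat a_k = a_k \hat a_{k-1}$; I would flag this at each division rather than belabour it. I do not anticipate any genuine difficulty beyond discovering the reformulation in the first paragraph: once $a_{n+1} = a_n - d_{n+1}$ and $\hat a_{n+1} = \hat a_n + d_{n+1}$ are established, both an additive conserved quantity ($a_n + \hat a_n$) and a multiplicative conserved quantity ($a_{n+1}\hat a_n$) drop out, and these are exactly relations \eqref{eahat} and \eqref{eq:first_relation}.
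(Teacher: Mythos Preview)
Your argument is correct. The key identities $a_{n+1}=a_n-d_{n+1}$ and $\hat a_{n+1}=\hat a_n+d_{n+1}$ do follow from \eqref{e-aa}--\eqref{e-dd} exactly as you sketch, and from them the two conserved quantities $a_n+\hat a_n$ and $a_{n+1}\hat a_n$ yield \eqref{eahat} and \eqref{eq:first_relation} immediately.

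The paper takes a different route. Rather than retaining $d_n$ as an auxiliary, it eliminates $d_n$ at the outset by solving \eqref{e-cc} for $d_n$ in terms of $a_n,c_n,c_{n+1}$, and then substitutes this expression into \eqref{e-aa} and \eqref{e-dd}. The first substitution gives $2a_{n+1}-c_{n+1}=2a_n-c_n$, which is the same invariant $a_n+\hat a_n=2a_0-c_0$ that you find, hence \eqref{eahat}. The second substitution, after feeding in $c_{n+1}=2a_{n+1}-2a_0+c_0$, yields a three-term relation $(-c_0+2a_0-a_{n+1})a_{n+2}+(c_0-2a_0+a_n)a_{n+1}=0$, which the paper then telescopes back to $n=0$ and evaluates via the explicit formula for $a_1$. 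Your approach is more direct and conceptually cleaner: by keeping $d_{n+1}$ in play you see at once that $a_{n+1}\hat a_n=a_n(\hat a_n-d_n)=a_n\hat a_{n-1}$, so the multiplicative invariant falls out without ever passing through a three-term recursion. The paper's elimination strategy trades this transparency for a computation that stays entirely within the $(a,c)$ variables; both arrive at the same pair of invariants, but yours exposes their origin more economically.
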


\begin{proof}
   We aim to express all quantities in terms of the sequence $a_n$.
    Utilizing \eqref{e-cc} and the condition $a_n \neq c_n$, we have
    $$d_{n} = \frac{a_{n} c_{n}-a_{n} c_{n +1}-c_{n}^{2}+c_{n} c_{n +1}}{2 a_{n}}.$$
    Substituted $d_n$ to equations \eqref{e-aa} and \eqref{e-dd} following two equation respectively,
    \begin{align}
    2a_{n+1}- c_{n+1} -  ( 2a_n -c_n ) &= 0, \label{eq:recursed_first}\\
     c_{n+1}^2 - 2c_{n+1}a_{n+1} - c_{n+2}c_{n+1} + a_{n+1}c_n + c_{n+2}a_{n+1} &= 0. \label{e-r1}
   \end{align}
Under the condition $a_k \neq c_k$ for $k\le n$, the  equation \eqref{eq:recursed_first} is to say $$2a_{n+1}- c_{n+1} = 2a_0- c_0.$$
Using the notation $\hat a_{n+1} =a_{n+1} -c_{n+1}$, we obtain \eqref{eahat} and  $ c_{n+1} =2a_{n+1}-2a_0+ c_0$.
By substituting $ c_{n+1}$ to \eqref{e-r1}, we obtain
   \[(-c_0 + 2a_0 - a_{n+1})a_{n+2} + (c_0 - 2a_0 + a_n)a_{n+1} = 0.\]
   Under the condition $a_k \neq c_k$ for $k\le n$,
   this equation can also be recursively reduced to the initial values, leading to
   \[a_{n+1}(c_0 - 2a_0 + a_n) = c_0a_1 - a_0a_1.\]
Moveover by Lemma  \ref{lem:e-f-g}, we have $a_1 =\frac{a_0(a_0-c_0-d_0)}{a_0-c_0}$  and then
   we obtained \eqref{eq:first_relation}
   this completes the proof.
\end{proof}

Indeed, the recursion \eqref{eq:first_relation} and \eqref{eahat} can be converted to a solvable second-order linear homogeneous recurrence relation for its numerators or denominators.

\begin{cor}\label{cor-an}
Let $a_n = \frac{A_n}{B_n}$ for $n\geq 0$. It follows that
\[ a_{n+1} = \frac{\alpha B_{n}}{B_{n+1}} \quad \text{and} \quad \hat{a}_{n+1} = -\frac{B_{n+2}}{B_{n+1}}, \]
where $\alpha = a_0(c_0 + d_0 - a_0)$. Let $\beta = c_0 - 2a_0$. Then, the sequence $\{B_n\}$ satisfies the linear homogeneous recurrence relation
\[ B_{n+1} = \beta B_n + \alpha B_{n-1} \]
with initial values $B_0 = 1$ and $B_1 = c_0 - a_0$.
\end{cor}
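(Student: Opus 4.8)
The plan is to recognize that relation \eqref{eq:first_relation} of Theorem \ref{thm-rec-a} is a first-order Möbius (Riccati-type) recursion in $a_n$, and that any such recursion linearizes once $a_n$ is written as a ratio. Concretely, set $\alpha = a_0(c_0+d_0-a_0)$ and $\beta = c_0-2a_0$, so that \eqref{eq:first_relation} becomes $a_{n+1}(\beta+a_n)=\alpha$ for $n\ge 1$, that is,
\[ a_{n+1} = \frac{\alpha}{\beta + a_n}. \]
I would then define the sequence $\{B_n\}$ directly by the second-order linear recurrence $B_{n+1}=\beta B_n+\alpha B_{n-1}$ with the initial data $B_0=1$ and $B_1=c_0-a_0$, and prove the two displayed identities for $a_{n+1}$ and $\hat a_{n+1}$ by induction on $n$.

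For the base case I would invoke the value $a_1 = a_0(a_0-c_0-d_0)/(a_0-c_0)$ furnished by Lemma \ref{lem:e-f-g} and check that it equals $\alpha B_0/B_1 = \alpha/(c_0-a_0)$, using $\alpha = -a_0(a_0-c_0-d_0)$ and $B_1=c_0-a_0$. For the inductive step, assuming $a_n=\alpha B_{n-1}/B_n$, I would substitute into the Möbius recursion:
\[ a_{n+1} = \frac{\alpha}{\beta + \alpha B_{n-1}/B_n} = \frac{\alpha B_n}{\beta B_n + \alpha B_{n-1}} = \frac{\alpha B_n}{B_{n+1}}, \]
where the final equality is precisely the defining recurrence of $B_n$. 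This clears denominators with no residual common factor, which is the crux of the argument: the numerator of $a_{n+1}$ is exactly $\alpha B_n$ (so $A_{n+1}=\alpha B_n$) and its denominator is the next term $B_{n+1}$, so the representation stays in lockstep with the linear recurrence rather than accumulating spurious cancellations.

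For the second identity I would feed this into \eqref{eahat}, namely $\hat a_{n+1}=-a_{n+1}+2a_0-c_0=-a_{n+1}-\beta$. Combining over the common denominator $B_{n+1}$ and once more using $B_{n+2}=\beta B_{n+1}+\alpha B_n$ gives
\[ \hat a_{n+1} = -\frac{\alpha B_n}{B_{n+1}} - \beta = -\frac{\alpha B_n + \beta B_{n+1}}{B_{n+1}} = -\frac{B_{n+2}}{B_{n+1}}, \]
exactly as claimed.

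I expect the only real subtlety, rather than a genuine obstacle, to be the bookkeeping around normalization and nonvanishing. Since a factorization $a_n=A_n/B_n$ is a priori determined only up to a common scalar, one must pin it down by fixing $B_0=1$ (equivalently $A_0=a_0$), which then forces $B_1=\beta B_0+A_0=c_0-a_0$; this is what makes the recurrence for $\{B_n\}$ well defined with the stated initial values. One should also observe that the standing hypotheses $a_k\neq 0$ and $\hat a_k\neq 0$ keep all the ratios above legitimate: $a_{n+1}$ finite forces $B_{n+1}\neq 0$, and the resulting identity $\hat a_{n+1}=-B_{n+2}/B_{n+1}\neq 0$ reproves $B_{n+2}\neq 0$, so the induction never divides by zero. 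Everything else is a direct algebraic verification driven by the single linear recurrence.
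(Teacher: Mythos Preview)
Your proposal is correct and follows essentially the same route as the paper: rewrite \eqref{eq:first_relation} as the M\"obius recursion $a_{n+1}=\alpha/(\beta+a_n)$, linearize via $a_n=A_n/B_n$ to obtain $A_{n+1}=\alpha B_n$ and $B_{n+1}=\beta B_n+A_n=\beta B_n+\alpha B_{n-1}$, and then read off $\hat a_{n+1}=-a_{n+1}-\beta=-B_{n+2}/B_{n+1}$ from \eqref{eahat}. Your explicit remarks on normalization ($B_0=1$) and on the nonvanishing of $B_n$ under the standing hypotheses are minor clarifications beyond what the paper records, but the argument is otherwise the same.
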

\begin{proof}
We rewrite the equation \eqref{eq:first_relation}:
\[
a_{n+1} = \frac{\alpha}{\beta  + a_{n}}.
\]
Substituting the definition of $a_n$, we obtain
\[
\frac{A_{n+1}}{B_{n+1}} = \frac{\alpha}{\beta + \frac{A_{n}}{B_{n}}} = \frac{\alpha B_{n}}{\beta B_n + A_n}.
\]
This simplifies to the recurrence relations
\begin{align*}
  A_{n+1} &  =\alpha B_{n}, \\
 B_{n+1} &= \beta B_n + A_n =\beta B_{n} + \alpha B_{n-1}.
\end{align*}
It is easy to verify the initial values.

By \eqref{eahat}, it is easy to derive that
$\hat{a}_{n+1} =-\frac{A_{n+1}}{B_{n+1}}-\beta=\frac{-\beta B_n-\beta B_{n+1}}{B_{n+1}}=-\frac{B_{n+2}}{B_{n+1}} .$
\end{proof}

We are now prepared to demonstrate Theorem 1.
\begin{proof}[Proof of Theorem 1]
Assume that $H_{n} \neq 0$. According to Equations \eqref{he2n+1} and \eqref{he2n}, this condition is equivalent to stating that $a_n = 0$ and $\hat{a}_{n-1} = 0$, while $B_n \neq 0$.
In light of Corollary \ref{cor-an}, and utilizing Equations \eqref{he2n+1} and \eqref{he2n}, we proceed to make the necessary substitutions, which will allow us to establish Theorem 1.
\end{proof}

Here, we present a well-known general term formula for a second-order homogeneous linear recurrence relation.

\begin{prop}\label{pror-f}
The recurrence relation for the sequence $f_n$ is given by $f_n = \beta f_{n-1} + \alpha f_{n-2}$. Consequently,
if $\mu, \gamma$ are roots of the characteristic equation $x^2 - \beta x - \alpha = 0$,
$$
f_{n-1} =
\begin{cases}
\frac{\mu^{n-1} (f_1 - f_0 \gamma) - \gamma^{n-1}(f_1 - f_0 \mu)}{\mu - \gamma}, & \text{if } \mu \neq \gamma, \\
\mu^{n-2} ((n-1)f_1 - \mu (n-2) f_0), & \text{if } \mu = \gamma.
\end{cases}
$$
\end{prop}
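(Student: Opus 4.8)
The plan is to recognize this as the standard closed-form solution of a second-order constant-coefficient linear recurrence and to prove it by the characteristic-root method together with the uniqueness of the solution determined by the two initial values $f_0, f_1$. Since the recurrence $f_n = \beta f_{n-1} + \alpha f_{n-2}$ together with $f_0$ and $f_1$ determines the entire sequence uniquely, it suffices to exhibit a closed form that both satisfies the recurrence for all $n \geq 2$ and reproduces the correct values at $n = 0, 1$. The first step I would record is Vieta's relations for the characteristic equation $x^2 - \beta x - \alpha = 0$, namely $\mu + \gamma = \beta$ and $\mu\gamma = -\alpha$; equivalently each root $r \in \{\mu, \gamma\}$ satisfies $r^2 = \beta r + \alpha$, so the geometric sequence $r^n$ solves the recurrence.

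For the case $\mu \neq \gamma$, I would posit $f_n = C_1 \mu^n + C_2 \gamma^n$, which satisfies the recurrence automatically by linearity. Imposing $f_0 = C_1 + C_2$ and $f_1 = C_1\mu + C_2\gamma$ gives a $2\times 2$ linear system whose coefficient matrix has the nonzero Vandermonde determinant $\gamma - \mu$; solving yields $C_1 = (f_1 - f_0\gamma)/(\mu - \gamma)$ and $C_2 = -(f_1 - f_0\mu)/(\mu - \gamma)$. Substituting the index $n-1$ then produces exactly the stated formula for the distinct-root branch.

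For the repeated root $\mu = \gamma$, which forces $\mu = \beta/2$ and $\beta^2 + 4\alpha = 0$, I would instead posit $f_n = (C_1 + C_2 n)\mu^n$ and verify directly that it obeys the recurrence, using $\mu^2 = \beta\mu + \alpha$ together with $2\mu = \beta$. The initial conditions give $C_1 = f_0$ and $C_2 = (f_1 - \mu f_0)/\mu$; evaluating at index $n-1$ and collecting terms over the common power $\mu^{n-2}$ yields $\mu^{n-2}\bigl((n-1)f_1 - \mu(n-2)f_0\bigr)$, matching the claim.

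The argument is entirely routine, so there is essentially no genuine obstacle; the only points demanding care are the algebraic simplification in the repeated-root case, where one must track the factor $\mu^{-1}$ appearing in $C_2$ and recombine to reach the clean form $\mu^{n-2}(\cdots)$, and the sign conventions, since here $\mu\gamma = -\alpha$ rather than $+\alpha$ because the characteristic polynomial is written as $x^2 - \beta x - \alpha$. I would keep the constants $\alpha, \beta$ abstract throughout, so the statement applies verbatim to the specialization $\alpha = a_0(c_0 + d_0 - a_0)$, $\beta = c_0 - 2a_0$ used for the sequence $\{B_n\}$ in Corollary \ref{cor-an}.
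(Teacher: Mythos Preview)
Your argument is correct and is precisely the standard characteristic-root method for linear recurrences. Note, however, that the paper does not actually supply its own proof of this proposition: it is stated as a well-known fact with a reference, so there is nothing further to compare against.
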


%

\begin{exa}
We evaluate the Hankel determinants for the function $E(x)$, which satisfies the equation $E(x)= \frac{1}{E(x) \,x^{2} \left(x^{2}-x -1\right)-x^{2}+x +1}$.
By applying Theorem \ref{thm-e} and Proposition \ref{pror-f}, we obtain the following expressions for the Hankel determinants:
  \begin{eqnarray*}
   H_{2n}(E(x)) = H_{2n+1}(E(x)) = \frac{\left(-1\right)^{n^{2}}}{10} \left(\left(5+\sqrt{5}\right) \left(-\frac{3}{2}-\frac{\sqrt{5}}{2}\right)^{n}+\left(5-\sqrt{5}\right) \left(\frac{\sqrt{5}}{2}-\frac{3}{2}\right)^{n}\right).   \\
  \end{eqnarray*}
  It is a well-known fact that the $n-$th Fibonacci number $Fib(n)$ is equal to $$- \frac{1}{\sqrt{5}} \left(-\frac{\sqrt{5}}{2}+\frac{1}{2}\right)^{n}+\frac{1}{\sqrt{5}}  \left(\frac{\sqrt{5}}{2}+\frac{1}{2}\right)^{n}.$$
Through direct computation, we can confirm that $H_{2n}(E(x))= H_{2n+1}(E(x)) = Fib(2n+1)$.
\end{exa}


Upon repeated application of Lemma \ref{lem:e-f-g} to compute $H_n(E)$, if the condition arise where $a_k = c_k$ for the first $k < n$ within $E_k$, Lemma 5 becomes inapplicable. However, fortuitously, applying the $\tau$ transform four times leads to a return to the original form of $E(x)$. This is encapsulated in the following lemma:

\begin{lem}\label{lem:e-eac}
Assume $\hat{a} = a - c = 0$. The generating functions $E^i(x)$ and $F^j(x)$ for $i=0,1$ and $j=1,2,3$ are uniquely defined by the system:
\begin{align*}
    E^0(x) &= \frac{a}{E^0(x)x^2(dx^2 - bx - 1) + ax^2 + bx + 1},\\
    F^1(x) &= \frac{adx^2}{x^2F^1(x) + ax^2 - bx - 1},\\
    F^2(x) &= \frac{ad}{x^4F^2(x) + ax^2 - bx - 1},\\
    F^3(x) &= \frac{ad(x^2 - bx) - a}{F^3(x)x^2 - ax^2 - bx - 1},\\
    E^1(x) &= -\frac{d}{(x^2d - xb - 1)x^2E^1(x) - (a + 2d)x^2 + bx + 1}.
\end{align*}
Here, $F^j(x) = \tau^j(E^0(x))$ for $j=1,2,3$, and $E^1(x) = \tau^4(E^0(x))$. Then the relations hold:
\begin{align*}
    H_n(E^0) &= a^nH_{n-1}(F^1),\quad n \geq 1\\
    H_n(E^0) &= a^n(-ad)^{n-1}H_{n-4}(F^2), \quad n \geq 4,\\
    H_n(E^0) &= -a^n(ad)^{2n-5}H_{n-5}(F^3), \quad n \geq 5,\\
    H_n(E^0) &= -a^{2n-5}(ad)^{2n-5}H_{n-6}(E^1), \quad n \geq 6.
\end{align*}
\end{lem}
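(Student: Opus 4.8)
The plan is to prove the four identities one step at a time, applying the quadratic transformation $\tau$ of Proposition~\ref{xinu(0,i,ii)} four times in succession and composing the resulting one-step relations between Hankel determinants. Thus I would set $F^1=\tau(E^0)$, $F^2=\tau(F^1)$, $F^3=\tau(F^2)$, and $E^1=\tau(F^3)$, verify at each stage that the transformed generating function agrees with the one displayed in the statement, and record the accompanying determinant relation; composing these four relations successively yields the four formulas for $H_n(E^0)$. A useful guiding observation is that $\tau^4$ sends the $E$-shape back to itself: the final function $E^1$ is again of the form \eqref{eq:E_x}, now with parameters $(-d,\,b,\,-(a+2d),\,d)$ in place of $(a,b,c,d)$, which is precisely what makes this period-four return usable when the hypothesis $a=c$ forbids the iteration of Lemma~\ref{lem:e-f-g}.

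For a single step I would bring the governing equation into the normal form \eqref{xinF(x)}, namely $\frac{x^{\delta}}{u(x)+x^{k}v(x)F(x)}$ with numerator exponent $\delta$ (the integer denoted $d$ in Proposition~\ref{xinu(0,i,ii)}), and then invoke the appropriate case. Three ingredients enter. First, the numerator of each equation is a scalar times a monomial $x^{\delta}$ (for $E^0,F^1,F^2$) or a polynomial with nonzero constant term (for $F^3$); factoring out its leading coefficient records that coefficient to the power $n$ and leaves the index unchanged. Second, whenever the normalized $u$ has $u(0)\neq1$ — which happens at the $F^1,F^2,F^3$ stages, where $u(0)=-1$ — case (i) contributes the factor $u(0)^{n}=(-1)^{n}$. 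Third, because $k\ge2$ throughout, the genuine degree reduction is always case (iii): via the splitting $u=u_L+x^{\delta+2}u_H$ it shifts the index by $\delta+1$ and introduces the sign $(-1)^{\binom{\delta+1}{2}}$. Reading off the exponents $\delta=0,2,0,0$ at the four stages gives per-step shifts $1,3,1,1$, which accumulate to the total shifts $1,4,5,6$ appearing in the statement, and pinpoints the only nontrivial reduction sign as the one at the $F^1$ step, where $\binom{\delta+1}{2}=\binom{3}{2}$ is odd.

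The main obstacle is the exact and consistent combination of these scalar and sign contributions: the powers of $a$ and $ad$ originate from the numerator constants, the alternating signs $(-1)^{n}$ originate from the repeated normalizations $u(0)=-1$, and an extra fixed sign originates from the $\binom{\delta+1}{2}$ term, so that a single misplaced sign corrupts every subsequent identity. A closely related difficulty is that the intermediate numerators are of genuinely different shapes — a monomial of positive degree for $F^1$, and a true polynomial $ad(x^2-bx)-a$ for $F^3$ — so that the splitting $u=u_L+x^{\delta+2}u_H$ and the resulting new $v$ must be computed afresh at each stage rather than transcribed from the previous one. Once these per-step equations and relations are computed and checked, confirming the displayed forms of $F^1,F^2,F^3,E^1$ is a routine verification of rational-function identities, and composing the four one-step Hankel relations delivers the four claimed evaluations.
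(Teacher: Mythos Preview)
Your plan is correct and coincides with the paper's (implicit) argument: the lemma is obtained by four successive applications of Proposition~\ref{xinu(0,i,ii)}, exactly as you outline, and the paper carries out precisely this kind of step-by-step $\tau$-computation elsewhere (Lemma~\ref{lem:e-f-g}, Example~\ref{exa-ac}, Section~\ref{systemg2r}). Your bookkeeping of the numerator exponents $\delta=0,2,0,0$, the resulting index shifts $1,3,1,1$, the single nontrivial sign $(-1)^{\binom{3}{2}}$ at the $F^1\!\to\!F^2$ step, and the separation of the scalar factors into ``numerator constant to the $n$th power'' times the $(-1)^n$ from $u(0)=-1$ is exactly the right accounting; carrying it through carefully is all that is required.
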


This Lemma suggests that such instances remain within the purview of our framework. Nevertheless, for arbitrary initial values $a_0, b_0, c_0, d_1$, determining the occurrence of $a_k = c_k$ necessitates a case-by-case analytical approach. See some examples in next section.

\section{ The Hankel Determinants of $G^{2,4}(x,a,b_1,b_2,c)$  }\label{systemg24}
\def\mT{\mathcal{T}}
For $r=2$, $k=4$,~$b_2 \neq 0 $,  we have the functional equation
\begin{align*}
  G^{2,4}(x,a,b_1,b_2,c)=\frac{a} { G^{2,4}(x,a,b_1,b_2,c)x^4 c+b_2 x^2 +b_1 x+1}.
\end{align*}

\begin{proof}[Proof of the Theorem \ref{th-g24}]
  We apply Proposition \ref{xinu(0,i,ii)} to $G_0:= G^{2,4}(x,a,b_1,b_2,c)$ by repeatedly using the transformation $\tau$.Then
\begin{align*}
  G_0(x)& \mathop{\longrightarrow}\limits^\tau G_1(x)\mathop{\longrightarrow}\limits^\tau G_{2}(x)
\end{align*}
where
 \begin{align*}
    G_1=\frac{-acx^2-b_2b_1 x-b_2}{-G_1 x^2-b_2 x^2+b_1 x+1}.
 \end{align*}
\begin{align*}
    G_2=\frac{\frac{ac}{b_2}}{G_2 x^2( -\frac{acx^2}{b_2}-b_1x-1)+ \frac{(2ac+b_2^2)x^2}{b_2}+b_1x+1}.
 \end{align*}
and
\begin{align}\label{24G0-G2}
 H_n(G_0) =a^n (-b_2)^{n-1}  H_{n-2}(G_2) .
\end{align}

By the assumption $H_n(G_2) \neq 0$ for all $n$, $G_2$ and $E$ share the same form.
By Theorem \ref{thm-e}, we have
\begin{align}
   H_{2n+1}(G_2) &=(ac)^{n^2}\left(\frac{ac}{b_2}\right)^{2n+1}B_n\left(\frac{ac}{b_2}, \frac{2ac+b_2^2}{b_2}, -\frac{ac}{b_2}\right), \label{G2n1} \\
    H_{2n}(G_2) &=(-1)^n(ac)^{n(n-1)}\left(\frac{ac}{b_2}\right)^{2n}B_n\left(\frac{ac}{b_2}, \frac{2ac+b_2^2}{b_2}, -\frac{ac}{b_2}\right) . \label{G2n}
\end{align}
Then the Theorem \ref{th-g24} follows by \eqref{24G0-G2},  \eqref{G2n1} and \eqref{G2n}.
\end{proof}

Firstly, Example is $H_n(G^{2,4}(x,a,b_1,b_2,c))\neq0$ for all $n$.
\begin{exa}
  Let $4ac+b_2^2=0$,
 this constitutes a system that we can solve by repeatedly using Lemma \ref{lem:e-f-g}.
 \begin{proof} Recalled that:
  $$ G^{2,4}(x,a,b_1,b_2,-\frac{b_2^2}{4a})={\frac {-4\,{a}^{2}}{G^{2,4}(x,a,b_1,b_2,-\frac{b_2^2}{4a})){x}^{4}{b_{{2}}}^{2}-4\,b_{{2}}{x}^{2}a-4\,b_{{1
}}xa-4\,a}}.$$
Then $G_2$ becomes
$$G_2=-{\frac {b_{{2}}}{ \left( b_{{2}}{x}^{4}-4\,b_{{1}}{x}^{3}-4\,{x}^{2}
 \right) G_2+2\,b_{{2}}{x}^{2}+4\,b_{{1}}x+4}}
$$
By Proposition \ref{pror-f}, we  have
$$B_{n-1}\left(-\frac{b_2}{4},\frac{b_2}{2},\frac{b_2}{4}\right) =\frac12\, \left( \frac12\,b_{{2}} \right) ^{n}+\frac14\,
 \left( 2\,n+2 \right)  \left( \frac12\,b_{{2}} \right) ^{n} \neq 0, \text{\ for all\ } n.
$$
By Theorem \ref{th-g24}, then we can obtain
\begin{align*}
  H_{2n+1}(G^{2,4}(x,a,b_1,b_2,-\frac{b_2^2}{4a})) =& b_2a^{2n+1}\left(-\frac{b_2^2}{4}\right)^{n^2}B_{n-1}\left(-\frac{b_2}{4},\frac{b_2}{2},\frac{b_2}{4}\right), \\
  H_{2n}(G^{2,4}(x,a,b_1,b_2,-\frac{b_2^2}{4a})) =&(-1)^n b_2a^{2n}\left(-\frac{b_2^2}{4}\right)^{n(n-1)}B_{n-1}\left(-\frac{b_2}{4},\frac{b_2}{2},\frac{b_2}{4}\right).
\end{align*}
 \end{proof}
\end{exa}
When $H_n(G^{2,4}(x,a,b_1,b_2,c))=0$ for a particular $n$, we have not identified a uniform formula for the Hankel determinants, and their periods remain indeterminable. To clarify this point, we offer three straightforward examples where $k ac + b^2 = 0$ for $k = 1, 2, 3$.

Example \ref{exa-ac} relies solely on Lemma \ref{lem:e-eac}, and the determinant exhibits a periodicity of six.
 \begin{exa}
 \label{exa-ac}
   When $ac+b_2^2=0$, we have
 \begin{align*}
   H_{6n}(G^{2,4}(x,a,b_1,b_2,-\frac{b_2^2}{a})) &= a^{6n}b_2^{18n^2-3n}, \\
   H_{6n+1}(G^{2,4}(x,a,b_1,b_2,-\frac{b_2^2}{a})) &= a^{6n+1}b_2^{18n^2+3n}, \\
   H_{6n+2}(G^{2,4}(x,a,b_1,b_2,-\frac{b_2^2}{a})) &=-a^{6n+2}b_2^{18n^2+9n+1}, \\
   H_{6n+3}(G^{2,4}(x,a,b_1,b_2,-\frac{b_2^2}{a})) &=-a^{6n+3}b_2^{18n^2+15n+3}, \\
   H_{6n+4}(G^{2,4}(x,a,b_1,b_2,-\frac{b_2^2}{a})) &=H_{6n+5}(G^{2,4}(x,a,b_1,b_2,-\frac{b_2^2}{a}))=0.
 \end{align*}
 \end{exa}
 \begin{proof}
   For  $ac+b_2^2=0$, then $G_2=-{\frac {b_{{2}}}{G_2{x}^{2} \left( b_{{2}}{x}^{2}-b_{{1}}x-1 \right) -b_{{2}}{x}^{2}+b_{{1}}x +1 } }$.
By using the Lemma \ref{lem:e-eac}.   This results in a periodic continued fractions of order 4 :
\begin{align*}
  G_2(x)& \mathop{\longrightarrow}\limits^\tau G_3(x)\mathop{\longrightarrow}\limits^\tau G_4(x)\mathop{\longrightarrow}\limits^\tau G_{5}(x)\mathop{\longrightarrow}\limits^\tau G_6(x)= G_2(x)\cdots.
\end{align*}
We obtain
\begin{align*}
  H_k(G_2)&=(-b_2)^kH_{k-1}(G_3), \quad  H_{k-1}(G_3)=b_2^{2(k-1)}H_{k-4}(G_4)\\
  H_{k-4}(G_4)&=b_2^{2(k-4)}H_{k-5}(G_5),\quad  H_{k-5}(G_5)=(-b_2)^{k-5}H_{k-6}(G_2).
\end{align*}
Combine them, we can get
$$ H_k(G_2)=-b_2^{6k-15}H_{k-6}(G_2).$$

The initial values are
\begin{align*}
  H_0(G_2)&=1,\ \ H_1(G_2)=(-b_2), \quad  H_2(G_2)=0,\ \  H_3(G_2)=0, \\
   H_{4}(G_2)&=b_2^{10}, \ \   H_{5}(G_2)=-b_2^{15}.
\end{align*}
Then we can get the result.
 \end{proof}

Example \ref{exa-2ac} relies on Lemma \ref{lem:e-f-g} and \ref{lem:e-eac}, and the determinant exhibits a periodicity of eight.
\begin{exa}
\label{exa-2ac}
  Let $2ac+b_2^2=0$, then $G_2$ becomes
$$G_2=-{\frac {b_{{2}}}{ G_2x^2\left( b_{{2}}{x}^{2}-2\,b_{{1}}{x}-2 \right)+2\,b_{{1}}x+2}}.
$$
The result is a periodic continued fractions of order 6.
\begin{align*}
   \underbrace{ G_2(x) \mathop{\longrightarrow}\limits^\tau G_3(x)\mathop{\longrightarrow}\limits^\tau G_4 }_{\text{Lemma \ref{lem:e-f-g}}}\underbrace{(x)\mathop{\longrightarrow}\limits^\tau G_{5}(x)\mathop{\longrightarrow}\limits^\tau G_6(x)\mathop{\longrightarrow}\limits^\tau G_7(x)\mathop{\longrightarrow}\limits^\tau G_8(x)}_{\text{Lemma \ref{lem:e-eac}}}= G_2(x)\cdots.
\end{align*}

We obtain
\begin{align*}
  H_k(G_2)&=\left(-\frac12b_2\right)^kH_{k-1}(G_3),\, \, H_{k-1}(G_3)=\left(-\frac12b_2\right)^{k-1}H_{k-2}(G_4), \\
  H_{k-2}(G_4)&=\left(-b_2\right)^{k-2}H_{k-3}(G_5),\, \,H_{k-3}(G_5)=\left(\frac12b_2\right)^{2k-6}H_{k-6}(G_6),\\
  H_{k-6}(G_6)&=\left(\frac12b_2\right)^{2k-12}H_{k-7}(G_7),\, \, H_{k-7}(G_7)=\left(-b_2\right)^{k-7}H_{k-8}(G_2).
\end{align*}
Combine them, then we can get
$$H_{k}(G_2)=\left(\frac12\right)^{6k-19}b_2^{8k-28} H_{k-8}(G_2).$$
By using some initial values,  one can readily obtain the Hankel determinants (we omit here).
\end{exa}

\begin{exa}
  Consider the case where $3ac + b_2^2 = 0$. We examine a periodic continued fraction of order 10, and observe that the period of the sequence $H_{n}(G^{2,4}(x,a,b_1,b_2,-\frac{b_2^2}{3a}))$ is 12. Within each period, the sequence exhibits exactly two zeros.
\end{exa}

\section{The Hankel determinants of $G(x,2,k)$ for $k \geq 5$}\label{systemg2r}

\begin{proof}[Proof of the Theorem \ref{thmG2k}]

 Recall that $$G^{2,k}(x,a,b_1,b_2,c)=\frac{a}{1+b_{1} x +b_{2} x^{2}+c x^{k} G^{2,k}(x,a,b_1,b_2,c)},$$ where $k\geq5$.
We apply Proposition \ref{xinu(0,i,ii)} to $G_0:=G^{2,k}(x,a,b_1,b_2,c)$ by repeatedly using the transformation $\tau$.
This results in a shifted periodic continued fractions of order 4 :
\begin{align*}
  G_0(x)& \mathop{\longrightarrow}\limits^\tau G_1^{(1)}(x)\mathop{\longrightarrow}\limits^\tau G^{(1)}_{2}(x)\mathop{\longrightarrow}\limits^\tau G^{(1)}_{3}(x)\mathop{\longrightarrow}\limits^\tau G^{(1)}_{4}(x)\mathop{\longrightarrow}\limits^\tau G^{(1)}_{5}(x)= G_1^{(p+1)}(x)\cdots.
\end{align*}
We obtain
\begin{align}
  H_k(G_0)=a^kH_{k-1}(G_1^{(1)}).\label{g-2-r-G0}
\end{align}

  For $p>1$, computer experiment suggests us to define, for $p \geq 1$.
  $$G_1^{(p)}=\frac{c a x^{k -2}+\left(-p^{2}+p \right) x^{2} b_{2}^{2}+p x b_{1} b_{2}+p b_{2}}{x^{2} G_1^{(p)} +\left(2 p -1\right) b_{2} x^{2}-b_{1} x -1}$$

Apply Proposition \ref{xinu(0,i,ii)} to get $G_2^{(p)}=\tau(G_1^{(p)})$.   This time $d=0$ and $u(x)$ is:
$$u(x)=\frac{\left(2 p -1\right) b_{2} x^{2}-b_{1} x -1}{c a x^{k -2}+\left(-p^{2}+p \right) x^{2} b_{2}^{2}+p x b_{1} b_{2}+p b_{2}} .$$
Since $k \geq 5$, we can decomposition it to $u_L+x^2 u_H$, where
 $u_L(x)=- \frac{1}{pb_2} $ and $$u_H(x)=\frac{p^{2} b_{2}^{2}+x^{k -4} a c}{\left(c a x^{k -2}+(-p^{2} +p)x^{2} b_{2}^{2}+p x b_{1} b_{2}+p b_{2}\right) p b_{2}}.$$
 Then we obtain
$$H_{n}(G_1^{(p)} )= (-p b_2)^n H_{n-1}(G_2^{(p)} ), $$
$$G_2^{(p)} =\frac{c a x^{k -4}}{\left(-c a x^{k -2}+\left(p^{2} b_{2}^{2}-p b_{2}^{2}\right) x^{2}-p x b_{1} b_{2}-p b_{2}\right) x^{2} G_2^{(p)} +2 c a x^{k -2}+p \,x^{2} b_{2}^{2}+p x b_{1} b_{2}+p b_{2}} .$$

Apply Proposition \ref{xinu(0,i,ii)} to get $G_3^{(p)}=\tau(G_2^{(p)})$.   This time $d=k-4$ and $u(x)$ is:
$$ \frac 1{c a}  (2 c a x^{k -2}+p \,x^{2} b_{2}^{2}+p x b_{1} b_{2}+p b_{2}).$$
we can decomposition it to $u_L+x^{k-2} u_H$,
and then we have
 $$H_{n-1}(G_2^{(p)} ) =(-1)^{\binom{k-3}{2}} \left(\frac{c_{0} a_{0}}{p b_{2}}\right)^{n-1} H_{n-k+2}(G_3^{(p)} )$$
 $$ G_3^{(p)}=\frac{a c \left(-c a x^{k -2}+p^{2} x^{2} b_{2}^{2}+p \,x^{2} b_{2}^{2}+p x b_{1} b_{2}+p b_{2}\right)}{p b_{2} \left(x^{k -2} G_3^{(p)}+ p b_{2}+2 c a x^{k -2}-p \,x^{2} b_{2}^{2}-p x b_{1} b_{2}-p b_{2}\right)}.$$

 Apply Proposition \ref{xinu(0,i,ii)} to get $G_4^{(p)}=\tau(G_3^{(p)})$.   This time $d= 0$ and $u(x)$ is:
 $$ -\frac{p b_{2} \left(-2 c a x^{k -2}+p \,x^{2} b_{2}^{2}+p x b_{1} b_{2}+p b_{2}\right)}{a c \left(-c a x^{k -2}+\left(p^{2} b_{2}^{2}+p b_{2}^{2}\right) x^{2}+p x b_{1} b_{2}+p b_{2}\right)} .$$
 Since $k \geq 5$,we can decomposition it to $u_L+x^2 u_H$, where
 $u_L=-\frac{pb_2}{a c}$ and
 $$u_H=\frac{\left(c a x^{k-4}+p^{2} b_{2}^{2}\right) p b_{2}}{\left(-c a x^{k-2}+p^{2} x^{2} b_{2}^{2}+p \,x^{2} b_{2}^{2}+p x b_{1} b_{2}+p b_{2}\right) c a}. $$
 Then we have
 $$H_{n-k+2}(G_3^{(p)})= \left(-\frac{c a}{p b_{2}}\right)^{n-k+2} H_{n-k+1}(G_4^{(p)}) $$
$$G_4^{(p)} = -\frac{p^{2} b_{2}^{2}}{\left(-c a x^{k}+\left(p^{2} b_{2}^{2}+p b_{2}^{2}\right) x^{4}+p \,x^{3} b_{1} b_{2}+b_{2} x^{2} p \right) G_4^{(p)}+\left(-2 p^{2} b_{2}^{2}-p b_{2}^{2}\right) x^{2}-p x b_{1} b_{2}-p b_{2}}.$$
 Apply Proposition \ref{xinu(0,i,ii)} and we have
 $u=\frac{2 b_{2} x^{2} p +b_{2} x^{2}+b_{1} x +1}{p b_{2}}$and
 $u_L =\frac{b_{1} x +1}{p b_{2}}$ and $u_H = \frac{2 b_{2} p +b_{2} }{p b_{2}} $.
 Then we obtain
 $$  H_{n-k+1}(G_4^{(p)})  = (p b_2)^{n-k+1} H_{n-k}  G_1^{(p+1)}.  $$

Combining the above formulas gives the recursion
\begin{align}\label{e-2-r-Gn}
  H_n(G_1^{(p)}) &= (-1)^{\binom{k-2}{2}- k} (c a)^{2n -k +1} H_{n-k}(G_1^{(p+1)})
\end{align}

The initial values are
\begin{align*}
& H_{0}(G^{(n+1)}_{1})=1, \quad H_{1}(G^{(n+1)}_{1})=-(n+1)b_2,\\
&H_{2}(G^{(n+1)}_{1})=H_{3}(G^{(n+1)}_{1})=\cdots=H_{k-3}(G^{(n+1)}_{1})=0,\\
&H_{k-2}(G^{(n+1)}_{1})=(-1)^{\binom{k-3}{2}+k-2}(n+1)b_2(ca)^{k-3}, \\ &H_{k-1}(G^{(n+1)}_{1})=(-1)^{\binom{k-3}{2}+k}(ca)^{k-1}.
\end{align*}
Then the theorem follows by the above initial values, \eqref{g-2-r-G0} and \eqref{e-2-r-Gn}.
\end{proof}

\section{concluding remark}

In this paper, we compute $G^{2,k}(x,a,b_1,b_2,c)$ for $k \geq  4$. For a more general class of lattice paths, the generating function is given by
$$
G^{r,k}(x,a,b_1,b_2,c) = \frac{a}{1 + b_1x + b_2x^2+ \cdots+b_r x^r + cx^k G^{r,k}(x,a,b_1,b_2,c)}.
$$
For the case of $G^{2,4}(x,a,b_1,b_2,c)$, we derive the Hankel determinants through a set of recurrence relations. For the general $G^{r,k}$, where $r$ is a fixed integer and $k \geq 2r+1$, we suggest that the determinant formula computation parallels the case of $G^{2,4}(x,a,b_1,b_2,c)$ for $k \geq 5$ as presented in this paper.

However, when $k \leq 2r$, the problem becomes increasingly challenging and remains unsolved. To our knowledge, for any arbitrary $t$, even if the function $F(x)$ assumes the following simple form,
$$ F(x) = \frac{1}{1 - t x^3 - x^2 F(x)}, $$
the corresponding Hankel determinant remains unknown.\\


\noindent
\textbf{Acknowledgments}
\ \ The authors would like to thank Guoce Xin and Yingrui Zhang for their careful reading and very useful comments.\\

\noindent 
\textbf{Conflict of Interest Statement} 
\ \ The authors declare no conflict of interest.\\

\noindent
\textbf{Data Availability Statement}\ \  Data availability is not applicable to this article as no
new data were created or analyzed in this study.


\end{document}